\title[IbPF for Bessel Bridges via Hypergeometric Functions]{Integration by Parts Formulae for  the Laws of  Bessel Bridges via Hypergeometric Functions}
\author{Henri Elad Altman}
\address{Imperial College London, UK}
\email{h.elad-altman@imperial.ac.uk}
\date{\today}
\newfont{\indic}{bbmss12}
\newcommand{\R}{\mathbb R}
\renewcommand{\d}{\, \mathrm{d}}
\theoremstyle{plain}
\newtheorem{thm}{Theorem}[section]
\newtheorem{lm}[thm]{Lemma}
\theoremstyle{definition}
\newtheorem{rk}[thm]{Remark}
\numberwithin{equation}{section}
\begin{document}


\begin{abstract}
In this article, we extend the integration by parts formulae (IbPF) for the laws of Bessel bridges recently obtained in \cite{EladAltman2019} to linear functionals. Our proof relies on properties of hypergeometric functions, thus providing a new interpretation of these formulae.
\end{abstract}

\maketitle

\section{Introduction}

\subsection{Bessel SPDEs}

Recently a family of stochastic PDEs which are infinite-dimensional analogues of Bessel processes were studied in \cite{EladAltman2019} and \cite{altman2019bessel}. These SPDEs define reversible dynamics for the laws of Bessel bridges, and have remarkable properties reminiscent of those of Bessel processes. In particular, they have the same scaling property as the additive stochastic heat equation, and are expected to arise as the scaling limits of several discrete dynamical interface models constrained by a wall. While the Bessel SPDEs of parameter $\delta \geq 3$, which are reversible dynamics for the  laws of Bessel bridges of dimension $\delta \geq 3$, had been introduced by Zambotti in the articles \cite{zambotti2002integration} and \cite{zambotti2003integration}, an open problem for several years was to extend the construction to $\delta <3$: apart from the derivation of an integration by parts formula for the special value $\delta=1$ - see\cite{zambotti2005integration} and
\cite{grothaus2016integration} - the extension to the whole regime $\delta<3$ had remained out of sight. This extension was a major challenge since, while the laws of Bessel bridges of dimension $\delta \geq 3$ can be represented as Gibbs measures with respect to the law of a Brownian bridge with an explicit, convex potential, such a representation fails for the laws of Bessel bridges of dimension $\delta < 3$, see Chap. 3.7 and 6.8 in \cite{zambotti2017random}. Indeed, the latter are not log-concave and, when $\delta < 2$, they are not even absolutely continuous with respect to the law of a Brownian bridge. In such a context, one in general cannot hope to construct an SPDE with the requested invariant measure. However, by exploiting the remarkable properties of Bessel bridges, the recent articles \cite{EladAltman2019} and \cite{altman2019bessel} have achieved this extension.

\subsection{Integration by parts formulae}
 
Let $C([0,1])$ be the space of continuous real-valued functions on $[0,1]$. By deriving integration by parts formulae (IbPF) for the laws of Bessel bridges of dimension $\delta< 3$ on the space $C([0,1])$, \cite{EladAltman2019} and \cite{altman2019bessel} have identified the structure that the corresponding SPDEs should have: namely, these SPDEs should contain a drift described by renormalised local times of the solutions (see (1.11)-(1.13) in \cite{EladAltman2019}), which is an analogue to higher orders of the principal value of local times appearing in the SDE satisfies by Bessel processes of dimension smaller than $1$, see e.g. Exercise 1.26 in \cite[Chap. XI.1]{revuz2013continuous}. The IbPF were also exploited to construct weak stationary solutions of these SPDEs in the special cases $\delta=1,2$, using Dirichlet form techniques, see  \cite[Section 5]{EladAltman2019} and \cite[Section 4]{altman2019bessel}. 

\subsection{Verification of the formulae for a different class of test functions}

The IbPF proved in Theorem 4.1 of \cite{EladAltman2019} and Theorem 3.1 of \cite{altman2019bessel} are valid for functionals of the form 
\begin{equation}
\label{exp_square_functional}
\Phi(X) = \exp \left( - \langle m, X^{2} \rangle \right), \qquad X \in C([0,1]),
\end{equation}
where we use the notation $\langle m, X^{2} \rangle = \int_0^1 X(r)^2 \, \d m(r)$, and where $m$ is any finite Borel measure on $[0,1]$. The reason for considering functionals as above is that \textit{squared} Bessel bridges possess a remarkable additivity property which allows to compute semi-explicitly their Laplace transform, see \cite[Chap. XI.3]{revuz2013continuous}. Note that observables defined by 
functionals of the form \eqref{exp_square_functional} characterize the laws of Bessel bridges, since those are supported on the set of \textit{non-negative} paths. It is nevertheless natural to ask whether the IbPF obtained in \cite{EladAltman2019} and \cite{altman2019bessel} still hold as such when one replaces functionals of the form \eqref{exp_square_functional} by more general ones.
In this article we show that these IbPF still hold for a very different class of test functionals. Namely, given a function $\varphi \in C([0,1])$, we consider the linear functional $\Phi$ defined on $L^2([0,1])$ by
\begin{equation}
\label{linear_functional}
\Phi(X) := \langle \varphi , X \rangle, 
\end{equation}
where we use the notation $\langle \varphi , X \rangle = \int_0^1 \varphi(r) X(r) \d r$. Note that, when $\varphi$ is not identically $0$, $\Phi$ is not bounded, and therefore may not be written as a function of the form \eqref{exp_square_functional}, so the results of \citep{EladAltman2019} and \cite{altman2019bessel} do not apply. However, it turns out that the IbPF still hold for such a functional $\Phi$. 
One striking feature of these formulae is the fact that, when $\delta <3$, they involve a renormalisation procedure using Taylor polynomials either of order $0$ (for $\delta \in (1,3)$) or of order $2$ (for $\delta \in (0,1)$), however there is no regime where only first-order renormalisation is required, as one would expect in the window $\delta \in (1,2)$. This absence of transition at $\delta=2$ was already observed in \cite[Remark 4.3]{EladAltman2019} for functionals $\Phi$ of the form \eqref{exp_square_functional}. Note that those functionals are very special, in particular they depend smoothly in $X^2$. On the other hand, non-zero functionals of the form \eqref{linear_functional}
depend smoothly on $X$ but not on $X^2$,
 however the absence of transition at $\delta=2$ holds for such functionals as well. Moreover, in a forthcoming article, we will show that a similar phenomenon actually holds for any functional $\Phi: L^2(0,1) \to \R$ which is bounded, $C^1$, with bounded Fr\'{e}chet differential. All these results support the conjecture, raised in \cite{EladAltman2019}, that the  first-order derivative of the diffusion local times of the solutions to the Bessel SPDEs must vanish at $0$, so that the drift term
appearing in these SPDEs needs to be renormalised at order 0 and 2, for $\delta \in (1,3)$ and $\delta \in (0,1)$ respectively, but never at order 1: see Remark \ref{sigma_fctn_b_square} below.

\subsection{Hypergeometric functions}

The proof of the IbPF for functionals of the form \eqref{linear_functional} has its own interest, as it provides an interpretation of the IbPF using properties of hypergeometric functions. More precisely, we exploit the fact that two-point functions of Bessel bridges can be written using hypergeometric functions, see \eqref{twopt} below. This fact is reminiscent of Cardy's formula for Bessel processes which, for the special value $\delta=5/3$, admits an interpretation in terms of the crossing probability for a critical percolation model: see \cite[Chap. 1.3]{katori2016bessel}.    

\section{The formulae for linear functionals}

Henceforth, as in \cite{EladAltman2019}, for all $\delta>0$, we denote by $P^\delta$ the law, on $C([0,1])$, of a $\delta$-dimensional Bessel bridge from $0$ to $0$ on $[0,1]$, and let $E^\delta$ denote the associated expectation operator (see \cite[Chap XI.3]{revuz2013continuous} for the definition of Bessel bridges). For all $b \geq 0$ and $r\in(0,1)$, we set as in Def. 3.4 of \cite{EladAltman2019}
\begin{equation}\label{Sigma}
\Sigma^\delta_r({\rm d}X \,|\, b) := \frac{p^{\delta}_{r}(b)}{b^{\delta-1}} \,
 P^{\delta} [ {\rm d} X \,| \, X_{r} = b],
\end{equation}
where $P^{\delta} [ {\rm d} X \,| \, X_{r} = b]$ is the law of a $\delta$-Bessel bridge between $0$ and $0$ pinned at $b$ at time $r$, see \cite[Section 3.3]{EladAltman2019}, and $p^{\delta}_{r}$ is the probability density function of $X_{r}$ under $P^{\delta}$,  given by
\[ p^{\delta}_{r}(b) =
\frac{b^{\delta-1}}{2^{\frac\delta2-1}\,\Gamma(\frac{\delta}{2})(r(1-r))^{\delta/2}}\, \exp \left(- \frac{b^{2}}{2r(1-r)} \right), \quad b \geq 0 . \]
We also recall the definition of a family of Schwartz distributions on $[0,\infty)$, denoted by $(\mu_{\alpha})_{\alpha \in \R}$, that plays an important role in the IbPF: 

\begin{itemize}
\item if $ \alpha = -k$ with $k \in \mathbb{N} \cup \{0\}$, we set 
\[
\langle \mu_{\alpha}, \psi \rangle := (-1)^{k} \psi^{(k)}(0), \qquad \forall \,
\psi \in S([0,\infty))
\]
\item else, we set
\[
\langle \mu_{\alpha} , \psi \rangle := \int_{0}^{+ \infty} \left(
\psi(x) - \sum_{0\leq j\leq 
-\alpha} \frac{x^{j}}{j!} \, \psi^{(j)}(0) \right) \frac{x^{\alpha -1}}{\Gamma(\alpha)} \d x, \quad \forall \,
\psi \in S([0,\infty)),
\]
\end{itemize}
where $S([0,\infty))$ is the family of $C^\infty$ functions $\psi: [0,\infty) \to \mathbb{R}$ such that, for all $k, l \geq 0$, there exists $C_{k,\ell} \geq0$ satisfying
\[
 | \psi^{(k)} (x)  | \, x^{\ell} \leq C_{k,\ell}, \qquad \forall x \geq 0.
\]
In addition, for any Fr\'{e}chet differentiable $\Phi: L^2([0,1]) \to \mathbb{R}$ and any $h \in L^2([0,1])$, we denote by $\partial_h \Phi$ the directional derivative of $\Phi$ along $h$:
\[ \partial_h \Phi(X) = \underset{\epsilon \to 0}{\lim} \, \frac{\Phi(X+\epsilon h) - \Phi(X)}{\epsilon}, \qquad X \in L^2([0,1]).\]
In particular, for $\Phi$ of the form \eqref{linear_functional}, $\partial_h \Phi(X) = \langle \varphi, h \rangle$ for all $X \in L^2([0,1])$. Finally, we denote by $C^{2}_{c}(0,1)$ the space of $C^{2}$ functions compactly supported in $(0,1)$. With these notations at hand, we may now state the main result of this article.

\begin{thm}
\label{ibpf_linear_test_functn}
Let $\delta >0$. For all $\varphi \in C([0,1])$, setting $\Phi(X)=\langle \varphi, X \rangle$, then for all $h \in C^{2}_{c}(0,1)$ we have 
\begin{equation}
\label{ibpf}
\begin{split}
E^{\delta} (\partial_{h} \Phi (X) ) &= - E^{\delta} [\langle h '' , X \rangle \Phi(X)] \\
& - \frac{\Gamma(\delta)}{4(\delta-2)} \int_{0}^{1} dr \, h(r) \, \langle \mu_{\delta-3} (d b) , \Sigma^{\delta}_{r} (\Phi | b) \rangle.
\end{split}
\end{equation}
\end{thm}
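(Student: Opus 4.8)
The plan is to turn \eqref{ibpf} into a purely analytic statement about the two-point function $\phi^\delta(s,t):=E^\delta[X_sX_t]$ of the $\delta$-Bessel bridge, and then to prove that statement by computing $\phi^\delta$ (and the relevant pinned moments) explicitly via hypergeometric functions, as in \eqref{twopt}. \emph{First}, since $\Phi(X)=\langle\varphi,X\rangle$ is linear, $\partial_h\Phi\equiv\langle\varphi,h\rangle$, so the left side of \eqref{ibpf} is $\int_0^1\varphi(t)h(t)\,\d t$. Writing $\langle h'',X\rangle\Phi(X)=\int_0^1\!\int_0^1 h''(s)\varphi(t)X_sX_t\,\d s\,\d t$ and using Fubini (valid because $E^\delta[X_sX_t]$ is bounded, by Cauchy--Schwarz and $E^\delta[X_t^2]=\delta\,t(1-t)$), the first term on the right of \eqref{ibpf} equals $-\int_0^1\!\int_0^1 h''(s)\varphi(t)\,\phi^\delta(s,t)\,\d s\,\d t$. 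Likewise $\Sigma^\delta_r(\Phi\mid b)=\frac{p^\delta_r(b)}{b^{\delta-1}}\int_0^1\varphi(t)\,E^\delta[X_t\mid X_r=b]\,\d t$; once one checks that $b\mapsto g_{r,t}(b):=\frac{p^\delta_r(b)}{b^{\delta-1}}E^\delta[X_t\mid X_r=b]$ lies in $S([0,\infty))$ and that $\mu_{\delta-3}$ may be interchanged with $\int_0^1\varphi(t)\,(\cdot)\,\d t$, the last term of \eqref{ibpf} becomes $-\frac{\Gamma(\delta)}{4(\delta-2)}\int_0^1\!\int_0^1 h(r)\varphi(t)\,\Psi^\delta(r,t)\,\d r\,\d t$ with $\Psi^\delta(r,t):=\langle\mu_{\delta-3}(\d b),g_{r,t}(b)\rangle$. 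Since $\varphi\in C([0,1])$ is arbitrary, \eqref{ibpf} is equivalent to
\[
h(t)=-\int_0^1 h''(s)\,\phi^\delta(s,t)\,\d s-\frac{\Gamma(\delta)}{4(\delta-2)}\int_0^1 h(r)\,\Psi^\delta(r,t)\,\d r, \qquad \forall\,t\in(0,1),\ \forall\,h\in C^2_c(0,1).
\]

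\emph{Next}, I would observe that $s\mapsto\phi^\delta(s,t)$ is smooth on $(0,t)$ and on $(t,1)$, continuous at $t$, with a corner there. Integrating by parts twice in $s$ (the compact support of $h$ removes the endpoint contributions at $0$ and $1$) then shows that the displayed identity holds for all $h\in C^2_c(0,1)$ if and only if (A) $\partial_s\phi^\delta(t^-,t)-\partial_s\phi^\delta(t^+,t)=1$ and (B) $\partial_s^2\phi^\delta(s,t)=-\frac{\Gamma(\delta)}{4(\delta-2)}\,\Psi^\delta(s,t)$ for every $s\ne t$; equivalently, in the sense of distributions in $s$, $-\partial_s^2\phi^\delta(\cdot,t)$ is the sum of a Dirac mass of weight one at $t$ and the smooth function $\frac{\Gamma(\delta)}{4(\delta-2)}\,\Psi^\delta(\cdot,t)$. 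By the symmetry $\phi^\delta(s,t)=\phi^\delta(t,s)$ it suffices to treat $s<t$.

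\emph{Finally}, by the Markov property of $\delta$-Bessel bridges, conditioning on $X_t=c$ makes $(X_u)_{u\le t}$ a $\delta$-Bessel bridge from $0$ to $c$ on $[0,t]$, so $\phi^\delta(s,t)=\int_0^\infty c\,p^\delta_t(c)\,m^\delta_{s,t}(c)\,\d c$, where $m^\delta_{s,t}(c)$ is the expected value at time $s$ of such a bridge; similarly $E^\delta[X_t\mid X_r=b]$ is a one-point function of a pinned bridge. Since the square of a Bessel bridge at an interior time is a scaled non-central $\chi^2_\delta$, the functions $m^\delta_{s,t}(c)$ and $E^\delta[X_t\mid X_r=b]$ are confluent hypergeometric (${}_1F_1$) functions of $c^2$, resp. $b^2$, and integrating one such ${}_1F_1$ against the Gaussian weight $c\,p^\delta_t(c)$ produces a Gauss hypergeometric (${}_2F_1$) function — this is exactly the representation \eqref{twopt}. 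I would then differentiate this ${}_2F_1$-expression for $\phi^\delta(s,t)$ twice in $s$, using the hypergeometric differential equation and contiguous relations to simplify; read off the jump of $\partial_s\phi^\delta$ across $s=t$ from the short-time behaviour of the kernels, giving (A); and identify the remaining smooth part with $\Psi^\delta(s,t)$. This last identification is where the regularisation encoded in $\mu_{\delta-3}$ becomes visible and gets interpreted: because the conditional law of $X_t$ given $X_r=b$ depends on $b$ only through $b^2$, and $p^\delta_r(b)/b^{\delta-1}$ is even, the function $g_{r,t}$ is \emph{even}, so its odd Taylor coefficients at $0$ vanish; hence only a Taylor polynomial of order $0$ (for $\delta\in(1,3)$) or of order $2$ (for $\delta\in(0,1)$) is effectively subtracted — which is precisely the announced absence of a transition at $\delta=2$. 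The values of $\delta$ at which hypergeometric parameters become non-positive integers, in particular $\delta=2$ where $\Gamma(\delta)/4(\delta-2)$ is singular but $\langle\mu_{-1},g_{r,t}\rangle=-g_{r,t}'(0)=0$, would be handled by a continuity argument in $\delta$.

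\emph{Main obstacle.} Steps one and two are routine bookkeeping. The real difficulty is the last step: obtaining the clean hypergeometric form of the two-point and pinned one-point functions, and then verifying that, after removing its Dirac part, $-\partial_s^2\phi^\delta(s,t)$ equals the renormalised pairing $\langle\mu_{\delta-3}(\d b),g_{s,t}(b)\rangle$ \emph{with exactly the constant} $\Gamma(\delta)/4(\delta-2)$. Concretely this is an identity between second $s$-derivatives of a ${}_2F_1$ and regularised Mellin-type integrals of ${}_1F_1$'s, and keeping track of all the normalising constants while treating the degenerate parameter values is the delicate point.
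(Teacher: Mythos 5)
Your overall route is exactly the one the paper follows: use linearity to reduce \eqref{ibpf} to a distributional identity in $r$ for the two-point function $E^{\delta}[X_rX_s]$ (your (A)+(B) are precisely \eqref{eq_distrib}), justify the interchange of $\mu_{\delta-3}$ with $\int_0^1\varphi(t)\,\mathrm{d}t$ via an explicit expansion of $\Sigma^{\delta}_r(X_s\,|\,b)$, represent $E^{\delta}[X_rX_s]$ by a Gauss hypergeometric function as in \eqref{twopt}, verify the off-diagonal identity by differentiating that expression, and obtain the Dirac mass from the jump of the first derivative across $r=s$. So there is no divergence of strategy.

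The issue is that what you label the ``main obstacle'' is essentially the entire content of the paper's proof, and your sketch does not close it; three concrete pieces are missing. First, the closed form of $\Sigma^{\delta}_r(X_s\,|\,b)$ (Lemma \ref{expr_sigma_power_series}): an explicit even power series in $b$ with a Gaussian factor, derived from the Bessel transition densities. This is what simultaneously gives your claim $g_{r,t}\in S([0,\infty))$, the bound \eqref{fubini} needed for the interchange with $\int\varphi(t)\,\mathrm{d}t$, and -- crucially -- the evaluation of the renormalised pairing $\langle\mu_{\delta-3}(\d b),\Sigma^{\delta}_r(X_s\,|\,b)\rangle$ as the single ${}_2F_1$ expression \eqref{rhs} with second parameter $\tfrac{\delta-3}{2}$, valid for every $\delta>0$ no matter how many Taylor terms $\mu_{\delta-3}$ subtracts; your evenness observation explains the absence of a transition at $\delta=2$ but does not by itself produce \eqref{rhs} with its normalising constant. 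Second, the differentiation step: passing from the ${}_2F_1$ with second parameter $\tfrac{\delta+1}{2}$ in \eqref{twopt} to the one with $\tfrac{\delta-3}{2}$ requires the precise parameter-lowering identity of Lemma \ref{classical_prop}, $\frac{d}{dz}\bigl(z^{c-b}(1-z)^{a+b-c}\,{}_2F_1(a,b,c,z)\bigr)=(c-b)\,z^{c-b-1}(1-z)^{a+b-c-1}\,{}_2F_1(a,b-1,c,z)$, applied twice; a generic appeal to ``the hypergeometric ODE and contiguous relations'' does not certify that the surviving factor is exactly $\frac{\Gamma(\delta)}{4(\delta-2)}$, and this identity (proved by hand in the paper because it is not a textbook contiguous relation) is precisely where that constant, and the cancellation of the intermediate terms, comes from. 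Third, the jump: in the hypergeometric representation each one-sided derivative is a combination of terms $(1-z)^{\text{power}}\,{}_2F_1(\cdot)$ in which the ${}_2F_1$ diverges as $z\to1$ while the prefactor vanishes, so computing the finite one-sided limits (and hence the value $-1$ of the jump) requires the quantitative asymptotics of ${}_2F_1$ at $z=1$ coming from the connection formula (Lemma \ref{asymptotics}); ``short-time behaviour of the kernels'' is too vague to deliver the exact value. Finally, the paper needs no continuity argument in $\delta$ at $\delta=2$: the identity is obtained by exact computation for each $\delta$, with the evenness of $\Sigma^{\delta}_r(X_s\,|\,\cdot)$ making the pairing vanish at $\delta=2$ so as to compensate the factor $\frac{1}{\delta-2}$; your proposed continuity patch would itself require justification (continuity in $\delta$ of both sides), so it adds work rather than removing it.
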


\begin{rk}
\label{sigma_fctn_b_square}
By Lemma \ref{expr_sigma_power_series} below, for a functional $\Phi$ of the form \eqref{linear_functional}, and for all $r \in (0,1)$, $\Sigma^{\delta}_{r} (\Phi | b)$ is a smooth function of $b^2$, so in particular
\begin{equation}
\label{vanishing_derivative_sigma}
\frac{d}{d b} \Sigma^{\delta}_{r} (\Phi | b) \big \rvert_{b=0} = 0.
\end{equation}
Recalling the definition of the distribution $\mu_{\delta}$, we thus retrieve from \eqref{ibpf} the formulae of Theorem 4.1 in \cite{EladAltman2019}. Note in particular that, due to \eqref{vanishing_derivative_sigma}, the apparent singularity at $\delta=2$ due to the term $\frac{1}{\delta-2}$ is cured by the vanishing at $\delta=2$ of $\langle \mu_{\delta-3} ({\rm{d}} b) , \Sigma^{\delta}_{r} (\Phi | b) \rangle$. 
The vanishing property \eqref{vanishing_derivative_sigma} was already observed in \cite{EladAltman2019} and \cite{altman2019bessel} when $\Phi$ is of the form \eqref{exp_square_functional}: for such functionals, which are very special as they depend smoothly on $X^2$, it was noted that $\Sigma^{\delta}_{r} (\Phi | b)$ is a smooth function of $b^2$, but it was unclear whether $\Sigma^{\delta}_{r} (\Phi | b)$ has a more complicated dependence on $b$ for more general functionals $\Phi$. On the other hand \eqref{vanishing_derivative_sigma} above shows that the smoothness of $\Sigma^{\delta}_{r} (\Phi | b)$ in $b^2$ remains true even when $\Phi(X)$ is not smooth in $X^2$, as is the case for non-zero functionals $\Phi$ of the form \eqref{linear_functional}.
From the dynamical viewpoint, this supports the conjecture, proposed in \cite{EladAltman2019} and \cite{altman2019bessel} that, for all $x \in (0,1)$, the family of diffusion local times $(\ell^b_{t,x})_{b,t \geq 0}$ of the process $(u(t,x))_{t \geq 0}$, where $u$ is a solution to the Bessel SPDE of parameter $\delta$, satisfies
\[ \frac{\partial}{\partial b} \ell^b_{t,x} \big \rvert_{b=0} = 0. \]
As a consequence, the Taylor polynomials based at $b=0$ of $\ell^b_{t,x}$ are even.  
Thus, the Taylor remainders 
appearing in the Bessel SPDEs (1.11)-(1.13) in \cite{EladAltman2019} jump, as $\delta$ goes below $1$, from 0th order to 2nd order, and there is no window for $\delta$ where the SPDE involves a renormalisation of purely order $1$.  
\end{rk}

\begin{rk}
While \cite{EladAltman2019} proved IbPF for the laws of Bessel bridges from $0$ to $0$, \cite{altman2019bessel} extended these formulae to the case of bridges with arbitrary endpoints $a,a' \geq 0$. In this article, we are considering for simplicity the former case, for which the interpretation in terms of hypergeometric functions is more transparent, but we believe Theorem \ref{ibpf_linear_test_functn} remains true for bridges with arbitrary endpoints as well.
\end{rk}
In the remainder of this article, we prove Theorem \ref{ibpf_linear_test_functn}. Note that given the linearity of our test functional $\Phi =\langle \varphi, \cdot \rangle$, the above formula can be rewritten in the following way:
\begin{equation}
\label{equality_zeta}
\begin{split}
\langle \varphi, h \rangle &= - \int_{0}^{1} \varphi(s) \int_{0}^{1} h''(r) E^{\delta}\left[X_{s} X_{r} \right]  \d r \d s\\
& - \frac{\Gamma(\delta)}{4(\delta-2)} \int_{0}^{1} \d s \, \varphi(s) \int_{0}^{1} \d r \, h(r) \, \langle \mu_{\delta-3} ({\rm{d}} b) , \Sigma^{\delta}_{r} (X_s | b) \rangle.
\end{split}
\end{equation}
In the last line, we used that, for all $r \in (0,1)$
\begin{equation}
\label{interversion}
\langle \mu_{\delta-3} (d b) , \Sigma^{\delta}_{r} (\Phi(X) | b) \rangle = \int_0^1 \d s \, \varphi(s) \langle \mu_{\delta-3}, \Sigma^{\delta}_{r} (X_s | b) \rangle. 
\end{equation}
We will first justify this interversion. To do so we invoke the following result which shows that, for all $r \in (0,1)$, the function $(s,b) \to \Sigma^{\delta}_{r} (X_s | b)$ is analytic on the domain $(s,b) \in (0,1) \setminus \{r\} \times \mathbb{R}_+$:
\begin{lm}
\label{expr_sigma_power_series}
For all $r,s \in (0,1)$, $r \neq s$, and $b \geq 0$, we have
\begin{equation*}
\Sigma^{\delta}_{r} (X_s | b) = \frac{1}{2^{\delta/2-1}(r(1-r))^{\delta/2}} \exp \left(- \frac{D(s,r)}{2} \, b^2 \right) \sum_{k=0}^{\infty} C_k f_k(s,r) \, b^{2k}, 
\end{equation*}
where
\[D(s,r) :=  \mathbf{1}_{\{s < r\}} \frac{1-s}{(r-s)(1-r)} + \mathbf{1}_{\{s > r\}} \frac{s}{r(s-r)}, \]
and, for all $k \geq 0$
\[C_{k} := \frac{\Gamma(k + \frac{\delta+1}{2})}{\Gamma(\delta/2) \, \Gamma(k+\delta/2) \, k!}, \]
and
\[ f_{k}(s,r) = \frac{\mathbf{1}_{\{s < r\}}}{\left(2 \left(r - s\right) \right)^{k-\frac{1}{2}}} \left( \frac{s}{r} \right)^{k+1/2}
+  \frac{\mathbf{1}_{\{s > r\}}}{\left(2 \left(s - r\right) \right)^{k-\frac{1}{2}}} \left( \frac{1 - s}{1 -r} \right)^{k+1/2}. \]
\end{lm}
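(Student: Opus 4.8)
\emph{Strategy.} The plan is to reduce the statement to an explicit formula for the conditional mean $E^{\delta}[X_{s}\mid X_{r}=b]$. Indeed, by the definition \eqref{Sigma} of $\Sigma^{\delta}_{r}$ and the explicit expression for $p^{\delta}_{r}$, and since $P^{\delta}[\,\cdot\mid X_{r}=b\,]$ is a probability measure,
\begin{equation*}
\Sigma^{\delta}_{r}(X_{s}\mid b)=\frac{p^{\delta}_{r}(b)}{b^{\delta-1}}\,E^{\delta}[X_{s}\mid X_{r}=b]=\frac{1}{2^{\delta/2-1}\,\Gamma(\delta/2)\,(r(1-r))^{\delta/2}}\,e^{-\frac{b^{2}}{2r(1-r)}}\,E^{\delta}[X_{s}\mid X_{r}=b].
\end{equation*}
Moreover, by the Markov property of the $\delta$-Bessel bridge, conditionally on $\{X_{r}=b\}$ the restriction of $X$ to $[0,r]$ is a $\delta$-Bessel bridge from $0$ to $b$ on $[0,r]$, while its restriction to $[r,1]$ is a $\delta$-Bessel bridge from $b$ to $0$ on $[r,1]$ (see \cite[Chap. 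XI]{revuz2013continuous}); hence $E^{\delta}[X_{s}\mid X_{r}=b]$ is the mean at time $s$ of the first bridge when $s<r$, and of the second when $s>r$. It therefore suffices to treat these two cases.

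\emph{The case $s<r$.} I would compute the mean of a $\delta$-Bessel bridge from $0$ to $b$ on $[0,r]$ at time $s$ via the squared Bessel representation $X_{t}=\sqrt{Y_{t}}$, where $Y$ is a squared Bessel bridge of dimension $\delta$ from $0$ to $b^{2}$ on $[0,r]$. The law of $Y_{s}$ has density $y\mapsto q^{\delta}_{s}(0,y)\,q^{\delta}_{r-s}(y,b^{2})/q^{\delta}_{r}(0,b^{2})$ with respect to Lebesgue measure, where $q^{\delta}$ is the transition kernel of $\mathrm{BESQ}^{\delta}$, which is explicit in terms of the modified Bessel function $I_{\nu}$ with $\nu=\delta/2-1$. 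Plugging in these formulae and performing the change of variables $y=x^{2}$, the quantity $E^{\delta}[X_{s}\mid X_{r}=b]=q^{\delta}_{r}(0,b^{2})^{-1}\int_{0}^{\infty}\sqrt{y}\, q^{\delta}_{s}(0,y)q^{\delta}_{r-s}(y,b^{2})\,\mathrm{d}y$ reduces to a Gaussian--Bessel integral of the form $\int_{0}^{\infty}x^{\mu-1}e^{-\alpha x^{2}}I_{\nu}(\beta x)\,\mathrm{d}x$ with $\mu=\nu+3$, $\alpha=\tfrac{r}{2s(r-s)}$ and $\beta^{2}/(4\alpha)=\tfrac{s\,b^{2}}{2r(r-s)}$. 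Evaluating this integral --- either by quoting its classical closed form as a multiple of the confluent hypergeometric function ${}_{1}F_{1}\!\big(\tfrac{\delta+1}{2};\tfrac{\delta}{2};\tfrac{s\,b^{2}}{2r(r-s)}\big)$, or, equivalently, by expanding $I_{\nu}$ in its power series and integrating term by term (which produces only Gamma integrals) --- and collecting the resulting powers of $2,r,s$ and $r-s$, one obtains
\begin{equation*}
E^{\delta}[X_{s}\mid X_{r}=b]=\Gamma(\delta/2)\;e^{-\frac{s\,b^{2}}{2r(r-s)}}\sum_{k\geq 0}C_{k}\,\frac{1}{\big(2(r-s)\big)^{k-1/2}}\Big(\frac{s}{r}\Big)^{k+1/2}b^{2k},
\end{equation*}
which is $\Gamma(\delta/2)\,e^{-s b^{2}/(2r(r-s))}\sum_{k}C_{k}f_{k}(s,r)b^{2k}$ with $f_{k}$ as in the statement.

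\emph{The case $s>r$ and conclusion.} For $s>r$ I would invoke the reversibility of Bessel bridges: the time-reversal on $[r,1]$ of a $\delta$-Bessel bridge from $b$ to $0$ is a $\delta$-Bessel bridge from $0$ to $b$ on $[r,1]$. Applying the computation of the previous step on the interval $[r,1]$ (of length $1-r$), evaluated at the point lying at distance $1-s$ from the left endpoint, and using $(1-r)-(1-s)=s-r$, one gets $E^{\delta}[X_{s}\mid X_{r}=b]=\Gamma(\delta/2)\,e^{-\frac{(1-s)b^{2}}{2(1-r)(s-r)}}\sum_{k}C_{k}f_{k}(s,r)b^{2k}$, now with the second term of $f_{k}$. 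Finally, multiplying by the prefactor $\tfrac{1}{2^{\delta/2-1}\Gamma(\delta/2)(r(1-r))^{\delta/2}}e^{-b^{2}/(2r(1-r))}$ from the first step, the factor $\Gamma(\delta/2)$ cancels and the two Gaussian factors merge thanks to the elementary identities
\begin{equation*}
\frac{1}{2r(1-r)}+\frac{s}{2r(r-s)}=\frac{1-s}{2(1-r)(r-s)}\quad(s<r),\qquad \frac{1}{2r(1-r)}+\frac{1-s}{2(1-r)(s-r)}=\frac{s}{2r(s-r)}\quad(s>r),
\end{equation*}
whose right-hand sides are exactly $\tfrac{1}{2}D(s,r)$ in the two regimes; this gives the claimed identity. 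Analyticity of $(s,b)\mapsto\Sigma^{\delta}_{r}(X_{s}\mid b)$ on $\big((0,1)\setminus\{r\}\big)\times\mathbb{R}_{+}$ then follows at once, since ${}_{1}F_{1}$ is an entire function and the series above converges locally uniformly.

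\emph{Main difficulty.} The conceptual ingredients --- the Markov property, the squared Bessel representation, and bridge reversibility --- are standard; the bulk of the work is the bookkeeping of the numerous multiplicative constants ($\Gamma$-factors and powers of $2,r,s,r-s$) needed to match $C_{k}$ and $f_{k}$ precisely, together with the rigorous justification of the term-by-term integration in the series expansion of $I_{\nu}$ (which rests on $\delta>0$, i.e.\ $\nu>-1$, so that all the Gamma integrals converge and dominated convergence applies) and of the scaling/reversal reduction when $s>r$ at the level of the bridge laws rather than merely of their one-dimensional marginals.
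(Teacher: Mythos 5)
Your proposal is correct and follows essentially the same route as the paper: both reduce the claim to the conditional mean $E^{\delta}[X_{s}\mid X_{r}=b]$ via the explicit (squared) Bessel transition kernels, expand the modified Bessel function $I_{\nu}$ (the paper writes this series directly inside $p^{\delta}_{r-s}(a,b)$), integrate term by term via Fubini, and evaluate Gamma integrals; your intermediate formulae (the ${}_1F_1$ parameters, the expansion of the conditional mean, and the Gaussian-merging identities giving $D(s,r)$) all check out. The only cosmetic differences are that the paper works with the joint density of $(X_s,X_r)$ in one stroke and treats only $s<r$ explicitly, whereas you factor through the Markov property of the bridge and handle $s>r$ by time reversal.
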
  

\begin{proof}
Assume for instance that $s < r$. Then, the joint law of $(X_s,X_r)$ on $[0,\infty)^2$, when $X$ is distributed as $P^\delta$, is given in terms of the transition densities $(p^\delta_t(x,y))_{t>0, x,y \geq 0}$ of a $\delta$-dimensional Bessel process by
\begin{equation} 
\label{joint_density_bessel}
p^\delta_s(0,a) \, p^\delta_{r-s}(a,b) \, \frac{p^\delta_{1-r}(b,0)}{p^\delta_{1}(0,0)} \, d a \, d b, 
\end{equation}
where we use the notation
\[ \frac{p^\delta_{1-r}(b,0)}{p^\delta_{1}(0,0)} = \lim_{\epsilon \to 0} \frac{p^\delta_{1-r}(b,\epsilon)}{p^\delta_{1}(0,\epsilon)}, \]
see \cite[Chap XI.3]{revuz2013continuous}. Therefore, for all $b \geq 0$,
\[ \Sigma^{\delta}_{r} (X_s | b) = \frac{p^{\delta}_{r}(b)}{b^{\delta-1}} E^{\delta}_{r}[X_s | X_r = b] = \int_0^\infty \frac{p^\delta_s(0,a) p^\delta_{r-s}(a,b)}{b^{\delta-1}} \frac{p^\delta_{1-r}(b,0)}{p^\delta_{1}(0,0)} a \, d a .\]
Recalling from \cite[Chap. XI.1]{revuz2013continuous} that, for all $a,b > 0$,
\[p^\delta_s(0,a) = \frac{a^{\delta-1}}{2^{\delta/2-1} \, s^{\delta/2} \, \Gamma(\delta/2)} \exp \left(- \frac{a^2}{2s} \right), \]
\[p^\delta_{r-s}(a,b) = \frac{b}{r-s} \left(\frac{b}{a}\right)^{\delta/2-1} \exp \left(- \frac{a^2+b^2}{2(r-s)} \right) \sum_{k=0}^\infty \frac{\left(\frac{ab}{2(r-s)}\right)^{2k + \delta/2-1}}{k! \, \Gamma(k+\delta/2)}, \] 
\[\frac{p^\delta_{1-r}(b,0)}{p^\delta_{1}(0,0)} = (1-r)^{-\delta/2} \exp \left(- \frac{b^2}{2(1-r)} \right), \]
the result follows at once by applying Fubini and by computations of integrals in terms of the $\Gamma$ function. 
\end{proof}
As a consequence, we deduce that the equality \eqref{interversion} holds for all $r \in (0,1)$.
Indeed, since $\mu_{\delta-3}$ is the distributional third-order derivative of  $\mu_{\delta}$ (see Prop 2.5 in \cite{EladAltman2019}), we have
\[\begin{split}
\langle \mu_{\delta-3} (d b) , \Sigma^{\delta}_{r} (\Phi(X) | b) \rangle &= - \langle \mu_{\delta} ({\rm{d}} b) , \frac{\d^3}{\d b^3} \Sigma^{\delta}_{r} (\Phi(X) | b) \rangle \\
&= - \frac{1}{\Gamma(\delta)} \int_0^\infty \d b \, b^{\delta-1} \frac{\d^3}{\d b^3} \Sigma^{\delta}_{r} (\Phi(X) | b),
\end{split}\]
and Lemma \ref{expr_sigma_power_series} ensures that
\begin{equation}
\label{fubini}
\int_0^1 \, d s \int_0^\infty \d b \, b^{\delta-1} \left| \frac{\d^3}{\d b^3} \Sigma^{\delta}_{r} (X_s | b) \right| < \infty. 
\end{equation}
Hence, we deduce that
\[\begin{split}
\langle \mu_{\delta-3} (d b) , \Sigma^{\delta}_{r} (\Phi(X) | b) \rangle &= - \langle \mu_{\delta} (d b) , \frac{\d^3}{\d b^3} \Sigma^{\delta}_{r} (\Phi(X) | b) \rangle \\
&= - \int_0^1 \d s \, \varphi(s) \, \frac{1}{\Gamma(\delta)} \int_0^\infty \d b \, b^{\delta-1} \frac{\d^3}{\d b^3} \Sigma^{\delta}_{r} (X_s | b) \\
&= - \int_0^1 \d s \, \varphi(s) \, \langle \mu_{\delta}, \frac{\d^3}{\d b^3} \Sigma^{\delta}_{r} (X_s | b) \rangle \\
&= \int_0^1 \d s \, \varphi(s) \, \langle \mu_{\delta-3}, \Sigma^{\delta}_{r} (X_s | b) \rangle,
\end{split}\]
where an application of Fubini justified by \eqref{fubini} was used to obtain the second line. Hence, the claimed equality \eqref{interversion} follows, and the proof of Theorem \ref{ibpf_linear_test_functn} indeed reduces to establishing the equality \eqref{equality_zeta}. To prove the latter, it suffices to prove that the following equality holds $ds$-almost-everywhere:
\begin{align*}
 h(s) &= - \int_{0}^{1} h''(r) E^{\delta}\left[X_{s} X_{r} \right]  \d r\\
& - \frac{\Gamma(\delta)}{4(\delta-2)} \int_{0}^{1} \d r \, h(r) \, \langle \mu_{\delta-3} ({\rm{d}} b) , \Sigma^{\delta}_{r} (X_s | b) \rangle.
\end{align*}
In turn, the latter equality will follow upon showing that, for all $s \in (0,1)$, the function $r \mapsto E^{\delta}\left[X_{r} X_{s}\right]$ satisfies the following equality of distributions on $(0,1)$:
\begin{equation}
\label{eq_distrib}
\begin{split}
 \frac{d^{2}}{dr^{2}} E^{\delta} \left[X_{r} X_{s}\right] &= - \delta_{s}(r)\\
& - \frac{\Gamma(\delta)}{4(\delta-2)} \langle \mu_{\delta-3} ({\rm{d}} b) , \Sigma^{\delta}_{r} (X_s | b) \rangle,
\end{split} 
\end{equation}
where $\delta_s$ denotes the Dirac measure at $s$. The proof of \eqref{eq_distrib} will rely on the explicit computation of second moments of Bessel bridges using hypergeometric functions. 
\begin{proof}[Proof of equality \eqref{eq_distrib}]
\textbf{First step:} We start by showing that, for all $s \in (0,1)$, the function $r \mapsto E^{\delta}\left[X_{r} X_{s}\right]$ is twice differentiable for $r \in (0,1)\setminus\{s\}$, and that
\begin{equation}
\label{scd_der}
\frac{d^{2}}{dr^{2}} E^{\delta} \left[X_{r} X_{s}\right] = - \frac{\Gamma(\delta)}{4(\delta-2)} \langle \mu_{\delta-3} ({\rm{d}} b) , \Sigma^{\delta}_{r} (X_s | b) \rangle.
\end{equation}
Assume for instance that $0<s<r<1$. Then, using the expression \eqref{joint_density_bessel} for the joint density of $(X_s,X_r)$, where $X \overset{(d)}{=} P^\delta$, we obtain
\begin{equation}
\label{twopt}
E^{\delta}[X_{s}X_{r}] = 2 \frac{\Gamma \left( \frac{\delta+1}{2} \right)^{2}}{ \Gamma \left( \frac{\delta}{2} \right)^{2}} \frac{(r-s)^{\delta/2+1} \left( s (1-r) \right)^{1/2}}{\left( r (1-s) \right)^{\frac{\delta+1}{2}}} \, {}_{2}F_{1} \left( \frac{\delta+1}{2}, \frac{\delta+1}{2}, \frac{\delta}{2}, \frac{s(1-r)}{r(1-s)} \right),
\end{equation}
while, by Lemma \ref{expr_sigma_power_series}, the right-hand side of \eqref{scd_der} equals
\begin{equation}
\label{rhs}
- \frac{1}{2} \frac{\Gamma \left( \frac{\delta+1}{2} \right)^{2}}{ \Gamma \left( \frac{\delta}{2} \right)^{2}} \frac{(r-s)^{\delta/2-1} s^{1/2}}{ (1-r)^{3/2} r^{\frac{\delta+1}{2}} (1-s)^{\frac{\delta-3}{2}}} \ {}_{2}F_{1} \left( \frac{\delta+1}{2}, \frac{\delta-3}{2}, \frac{\delta}{2}, \frac{s(1-r)}{r(1-s)} \right)
\end{equation}
where ${}_{2}F_{1}$ denotes the hypergeometric function. Recall that  the hypergeometric function ${}_{2}F_{1}$ is defined, for all $a,b,c \in \mathbb{C} \setminus \mathbb{Z}_{-}$, and all $z \in \mathbb{C}$ such that $|z| < 1$, by
\[ {}_{2}F_{1}(a,b,c,z) := \sum_{k=0}^{+\infty} \frac{(a)_{k} (b)_{k}}{k! (c)_{k}} z^{k} \]
where, for any $\alpha > 0$ and $k \geq 0$, $(\alpha)_{k}:= \begin{cases} \ 1, \ &\text{if} \ k = 0 \\  \alpha (\alpha +1) \ldots (\alpha + k -1), \ &\text{if} \ k \geq 1\end{cases} $. 
\\
Note that the second argument of the hypergeometric function appearing in $\eqref{twopt}$, $\frac{\delta+1}{2}$, differs by $2$ from the one appearing in $\eqref{rhs}$, $\frac{\delta-3}{2}$. Hence, in order to prove the equality \eqref{scd_der}, we need to exploit a differential equality relating ${}_{2}F_{1}(a,b,c,z)$ to ${}_{2}F_{1} (a,b',c,z)$, for any two parameters $b$ and $b'$ differing by an integer. Such a relation is provided by the following property:
\begin{lm}
\label{classical_prop}
\begin{equation}
\label{relation_hypergeom}
\frac{d}{dz} \left( z^{c-b}(1-z)^{a+b-c} {}_{2}F_{1}(a,b,c,z) \right) = (c-b) \, z^{c-b-1} (1-z)^{a+b-c-1} {}_{2}F_{1}(a,b-1,c,z).
\end{equation}
\end{lm}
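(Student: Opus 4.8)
The plan is to deduce \eqref{relation_hypergeom} from two classical facts about the Gauss hypergeometric function. The first is Euler's transformation formula
\[
{}_{2}F_{1}(a,b,c,z) = (1-z)^{c-a-b}\, {}_{2}F_{1}(c-a,c-b,c,z), \qquad 0<z<1,
\]
valid whenever $c\notin\mathbb{Z}_{\le 0}$, which holds in our application since $c=\delta/2>0$. The second is the elementary differentiation rule
\[
\frac{d}{dz}\Bigl( z^{\beta}\,{}_{2}F_{1}(\alpha,\beta,\gamma,z)\Bigr) = \beta\, z^{\beta-1}\,{}_{2}F_{1}(\alpha,\beta+1,\gamma,z), \qquad 0<z<1,
\]
which one obtains by differentiating the series $z^{\beta}\,{}_{2}F_{1}(\alpha,\beta,\gamma,z) = \sum_{k\ge 0}\frac{(\alpha)_k(\beta)_k}{(\gamma)_k\, k!}\, z^{\beta+k}$ term by term --- legitimate since this series and its termwise derivative converge uniformly on compact subsets of $(0,1)$ --- and then using the Pochhammer identity $(\beta)_k(\beta+k) = \beta\,(\beta+1)_k$ to recognise the resulting series.

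With these at hand, I would argue as follows. It suffices (and is all we shall use) to establish \eqref{relation_hypergeom} for $z\in(0,1)$, which is precisely the range of $z = s(1-r)/(r(1-s))$ occurring in \eqref{twopt}--\eqref{rhs}; on this range the fractional powers $z^{c-b}$ and $(1-z)^{a+b-c}$ are unambiguously defined. By Euler's transformation, $(1-z)^{a+b-c}\,{}_{2}F_{1}(a,b,c,z) = {}_{2}F_{1}(c-a,c-b,c,z)$, so that the left-hand side of \eqref{relation_hypergeom} equals $\frac{d}{dz}\bigl( z^{c-b}\,{}_{2}F_{1}(c-a,c-b,c,z)\bigr)$. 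Since the exponent $c-b$ now coincides with the second numerator parameter, the differentiation rule above (with $\alpha=c-a$, $\beta=c-b$, $\gamma=c$) yields
\[
\frac{d}{dz}\Bigl( z^{c-b}(1-z)^{a+b-c}\,{}_{2}F_{1}(a,b,c,z)\Bigr) = (c-b)\, z^{c-b-1}\,{}_{2}F_{1}(c-a,c-b+1,c,z).
\]
Finally, applying Euler's transformation once more to ${}_{2}F_{1}(c-a,c-b+1,c,z)$ --- whose parameters $A=c-a$, $B=c-b+1$, $C=c$ satisfy $C-A=a$, $C-B=b-1$, $C-A-B=a+b-c-1$ --- turns the last hypergeometric factor into $(1-z)^{a+b-c-1}\,{}_{2}F_{1}(a,b-1,c,z)$, and substituting this back produces exactly the right-hand side of \eqref{relation_hypergeom}.

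I do not anticipate a genuine obstacle here: \eqref{relation_hypergeom} is a classical contiguous-type identity, and the only technical point is the consistent choice of branches of the fractional powers of $z$ and $1-z$, which is automatic once we restrict to $z\in(0,1)$ --- a harmless restriction, since that is the only range of arguments needed in \eqref{twopt}--\eqref{rhs} and in the sequel. The one genuinely useful observation, which is exactly what the lemma isolates, is that stripping off the factor $(1-z)^{a+b-c}$ via Euler's transformation converts the awkward shift of a \emph{numerator} parameter of ${}_{2}F_{1}$ (which a priori would call for a Gauss contiguous relation combined with a differentiation) into the trivial rule for $\frac{d}{dz}\bigl( z^{\beta}\,{}_{2}F_{1}(\alpha,\beta,\gamma,z)\bigr)$.
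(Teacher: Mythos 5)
Your argument is correct, but it follows a genuinely different route from the paper. The paper proves \eqref{relation_hypergeom} by a direct, self-contained series computation: it expands the left-hand side as $z^{c-b-1}(1-z)^{a+b-c-1}S(a,b,c,z)$, recombines the coefficients, and uses the elementary Pochhammer identities $(a)_k(k+a)=(a)_{k+1}$, $(b+k-1)(k+c-b)-k(c+k-1)=(c-b)(b-1)$ and $(b-1)(b)_{k-1}=(b-1)_k$ to recognise $S=(c-b)\,{}_2F_1(a,b-1,c,z)$ --- the author explicitly chose this route because the relation ``does not seem easy to find in the literature''. You instead deduce the identity from two classical inputs: Euler's transformation ${}_2F_1(a,b,c,z)=(1-z)^{c-a-b}{}_2F_1(c-a,c-b,c,z)$ and the standard differentiation rule $\frac{d}{dz}\bigl(z^{\beta}{}_2F_1(\alpha,\beta,\gamma,z)\bigr)=\beta\,z^{\beta-1}{}_2F_1(\alpha,\beta+1,\gamma,z)$, applied with $\beta=c-b$ and followed by a second Euler transformation; I checked the parameter bookkeeping ($C-A=a$, $C-B=b-1$, $C-A-B=a+b-c-1$) and it is exact. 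What your approach buys is brevity and a conceptual explanation of the lemma: the prefactor $z^{c-b}(1-z)^{a+b-c}$ is precisely what Euler's transformation needs in order to convert the awkward shift of a numerator parameter into the trivial $z^{\beta}$-rule, and your restriction to $z\in(0,1)$ is harmless since that is the only range used in \eqref{twopt}--\eqref{rhs} (and in the application the intermediate parameters $c-a$, $c-b$, $c-b+1$ are not nonpositive integers, so all hypergeometric functions involved are covered by the paper's definition). What the paper's proof buys is self-containedness at the level of bare power series, with no appeal to Euler's transformation; your derivation can be viewed as the literature-based proof the author said was hard to locate.
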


\begin{proof}
Since the above relation does not seem easy to find in the litterature, we provide a proof. Note that the left-hand side of \eqref{relation_hypergeom} takes the form
\[ z^{c-b-1} (1-z)^{a+b-c-1} S(a,b,c,z),\]
where
\[\begin{split}
S(a,b,c,z) &= \sum_{k=0}^\infty\frac{(a)_{k} (b)_{k}}{k! (c)_{k}} \left[(k+c-b)(1-z)z^k - (a+b-c)z^{k+1}\right] \\
&= \sum_{k=0}^\infty \frac{(a)_{k} (b)_{k}}{k! (c)_{k}} \left[(k+c-b)z^k - (k+a)z^{k+1}\right].
\end{split}\]
Now, recalling that $(a)_{k} (k+a) = (a)_{k+1}$, it follows that 
\[\begin{split}
\sum_{k=0}^\infty \frac{(a)_{k} (b)_{k}}{k! (c)_{k}} \, (k+a)z^{k+1} &= \sum_{k=0}^\infty \frac{(a)_{k+1} (b)_{k}}{k! (c)_{k}} \, z^{k+1} \\
&=\sum_{k=1}^\infty \frac{(a)_k (b)_{k-1}}{(k-1)! \, (c)_{k-1}} \, z^k \\
&= \sum_{k=1}^\infty \frac{(a)_k (b)_{k-1}}{k! (c)_{k}} \, k(c+k-1) \, z^k.
\end{split}\]
Therefore, 
\[\begin{split} 
S(a,b,c,z) &= (c-b) + \sum_{k=1}^\infty \frac{(a)_k (b)_{k-1}}{k! (c)_{k}} \left[(b+k-1)(k+c-b) - k(c+k-1)\right] z^k.
\end{split}\]
Since, for all $k \geq 1$,  $(b+k-1)(k+c-b) - k(c+k-1) = (c-b)(b-1)$, and recalling that $(b-1) (b)_{k-1} = (b-1)_k$, we deduce that 
\[ S(a,b,c,z) = (c-b) + (c-b) \sum_{k=1}^\infty \frac{(a)_k (b-1)_{k}}{k! (c)_{k}} z^k = (c-b) \, {}_{2}F_{1}(a,b-1,c,z),\]
so the claim follows. 
\end{proof}
We exploit the relation provided by Lemma \ref{classical_prop} as follows. Let $ s \in (0,1)$, and $r \in (s,1)$. Setting $z := \frac{s(1-r)}{r(1-s)}$, we have
\[ 1-z = \frac{r-s}{r(1-s)}.\]
Therefore, equality \eqref{twopt} can be rewritten as follows
\[ E^{\delta}[X_{s}X_{r}] =  K(\delta) \, s(1-r) \, z^{-1/2} (1-z)^{\delta/2 +1} \, {}_{2}F_{1} \left( \frac{\delta+1}{2}, \frac{\delta+1}{2}, \frac{\delta}{2}, z \right) \]
where 
\[ K(\delta) := 2 \, \frac{\Gamma \left( \frac{\delta+1}{2} \right)^{2}}{ \Gamma \left( \frac{\delta}{2} \right)^{2}}. \]
Therefore, for all $r \in (s,1)$, we obtain, by the Leibniz formula and the chain rule
\begin{align*}
\frac{d}{dr} E^{\delta} \left[X_{r} X_{s}\right] &= - K(\delta) \, s z^{-1/2} (1-z)^{\delta/2 +1} \, {}_{2}F_{1} \left( \frac{\delta+1}{2}, \frac{\delta+1}{2}, \frac{\delta}{2}, z \right) \\
&+ K(\delta) \, s(1-r) \frac{dz}{dr} \frac{d}{dz} \left( z^{-1/2} (1-z)^{\delta/2 +1} \, {}_{2}F_{1} \left( \frac{\delta+1}{2}, \frac{\delta+1}{2}, \frac{\delta}{2}, z \right) \right). 
\end{align*}
But $\frac{dz}{dr} = - \frac{s}{r^{2}(1-s)}$, and, by Lemma \ref{classical_prop}, it holds
\[ \frac{d}{dz} \left( z^{-1/2} (1-z)^{\delta/2 +1} {}_{2}F_{1} \left( \frac{\delta+1}{2}, \frac{\delta+1}{2}, \frac{\delta}{2}, z \right) \right) = - \frac{1}{2} z^{-3/2} (1-z)^{\delta/2} {}_{2}F_{1} \left( \frac{\delta+1}{2}, \frac{\delta-1}{2}, \frac{\delta}{2}, z \right). \] 
Hence we obtain
\begin{align*}
\frac{d}{dr} E^{\delta} \left[X_{r} X_{s}\right] &= - K(\delta) \, s z^{-1/2} (1-z)^{\delta/2 +1} \, {}_{2}F_{1} \left( \frac{\delta+1}{2}, \frac{\delta+1}{2}, \frac{\delta}{2}, z \right) \\
&- K(\delta) \, s(1-r) \frac{s}{r^{2}(1-s)} \left(-\frac{1}{2} z^{-3/2} (1-z)^{\delta/2} {}_{2}F_{1} \left( \frac{\delta+1}{2}, \frac{\delta-1}{2}, \frac{\delta}{2}, z \right) \right) \\
&= - K(\delta) \, s z^{-1/2} (1-z)^{\delta/2 +1} {}_{2}F_{1} \left( \frac{\delta+1}{2}, \frac{\delta+1}{2}, \frac{\delta}{2}, z \right) \\
&+ K(\delta) \, \frac{1}{2} \frac{1-s}{1-r}  z^{1/2} (1-z)^{\delta/2} {}_{2}F_{1} \left( \frac{\delta+1}{2}, \frac{\delta-1}{2}, \frac{\delta}{2}, z \right). 
\end{align*}
Differentiating with respect to $r$ a second time, we obtain
\begin{align*}
\frac{d^{2}}{dr^{2}} E^{\delta} \left[X_{r} X_{s}\right] &= - K(\delta) \, s \frac{dz}{dr} \frac{d}{dz} \left\{ z^{-1/2} (1-z)^{\delta/2 +1} {}_{2}F_{1} \left( \frac{\delta+1}{2}, \frac{\delta+1}{2}, \frac{\delta}{2}, z \right) \right\} \\
&+ \frac{1}{2} K(\delta) \, \frac{1-s}{(1-r)^{2}}  z^{1/2} (1-z)^{\delta/2} {}_{2}F_{1} \left( \frac{\delta+1}{2}, \frac{\delta-1}{2}, \frac{\delta}{2}, z \right) \\ 
&+ \frac{1}{2} K(\delta) \, \frac{1-s}{1-r} \frac{dz}{dr} \frac{d}{dz} \left\{ z^{1/2} (1-z)^{\delta/2} {}_{2}F_{1} \left( \frac{\delta+1}{2}, \frac{\delta-1}{2}, \frac{\delta}{2}, z \right) \right\}. 
\end{align*}
Using again the expression for $\frac{dz}{dr}$, as well as Lemma \ref{classical_prop}, we deduce that
\begin{align*}
\frac{d^{2}}{dr^{2}} E^{\delta} \left[X_{r} X_{s}\right] &= K(\delta) \, s \frac{(1-r)}{r^{2}(1-s)} \left\{ -\frac{1}{2} z^{-3/2} (1-z)^{\delta/2} {}_{2}F_{1} \left( \frac{\delta+1}{2}, \frac{\delta-1}{2}, \frac{\delta}{2}, z \right) \right\} \\
&+ \frac{1}{2} K(\delta) \, \frac{1-s}{(1-r)^{2}}  z^{1/2} (1-z)^{\delta/2} {}_{2}F_{1} \left( \frac{\delta+1}{2}, \frac{\delta-1}{2}, \frac{\delta}{2}, z \right) \\ 
&- \frac{1}{2} K(\delta) \, \frac{1-s}{1-r} \frac{s}{r^{2}(1-s)} \left\{ \frac{1}{2} z^{-1/2} (1-z)^{\delta/2 -1} {}_{2}F_{1} \left( \frac{\delta+1}{2}, \frac{\delta-3}{2}, \frac{\delta}{2}, z \right) \right\}. 
\end{align*}
The first two terms cancel out, so that we obtain
\begin{align*}
\frac{d^{2}}{dr^{2}} E^{\delta} \left[X_{r} X_{s}\right] &= 
- \frac{K(\delta)}{4} \frac{s}{r^{2}(1-r)} z^{-1/2} (1-z)^{\delta/2 -1} {}_{2}F_{1} \left( \frac{\delta+1}{2}, \frac{\delta-3}{2}, \frac{\delta}{2}, z \right) \\
&= - \frac{K(\delta)}{4} \frac{(r-s)^{\delta/2-1} s^{1/2}}{ (1-r)^{3/2} r^{\frac{\delta+1}{2}} (1-s)^{\frac{\delta-3}{2}}} \ {}_{2}F_{1} \left( \frac{\delta+1}{2}, \frac{\delta-3}{2}, \frac{\delta}{2}, \frac{s(1-r)}{r(1-s)} \right)
\end{align*}
and, by \eqref{rhs}, the last expression is equal to
\[- \frac{\Gamma(\delta)}{4(\delta-2)} \langle \mu_{\delta-3} (\d b) , \Sigma^{\delta}_{r} (X_s | b) \rangle.\]
This yields the claim. 

\textbf{Second step:}

We now prove that equality \eqref{eq_distrib} holds. More precisely, for any test function $h \in C^{2}_{c}(0,1)$, we compute
\[ \int_{0}^{1} h''(r) \, E^{\delta} [X_{r} X_{s}] \, \d r. \]
Performing two successive integration by parts on the intervals $(0,s)$ and $(s,1)$, and recalling that $h$ has compact support in $(0,1)$ and is continuous at $s$, we obtain 
\begin{align}
\label{two_succ_ibps}
\int_{0}^{1} h''(r) E^{\delta} [X_{r} X_{s}] dr &= h(s) \left\{  \frac{d^{+}}{dr} E^{\delta} [X_{r} X_{s}]  - \frac{d^{-}}{dr} E^{\delta} [X_{r} X_{s}]  \right\} \\
& \nonumber + \int_{0}^{1} h(r) \frac{d^{2}}{dr^{2}} E^{\delta} [X_{r} X_{s}] \, \d r
\end{align}
where
\begin{equation}
\label{right_limit}
 \frac{d^{+}}{dr} E^{\delta} [X_{r} X_{s}] := \underset{r \searrow s}{\lim} \ \frac{d}{dr} E^{\delta} [X_{r} X_{s}] 
\end{equation}  
and 
\begin{equation}
\label{left_limit}
\frac{d^{-}}{dr} E^{\delta} [X_{r} X_{s}] := \underset{r \nearrow s}{\lim} \ \frac{d}{dr} E^{\delta} [X_{r} X_{s}]
\end{equation} 
are the right and left limits of the derivative of $E^{\delta} [X_{r} X_{s}]$ at $r=s$ (the existence of these limits will be justified herebelow).
By the first step, we readily know that the second term in the right-hand side above equals
\[- \frac{\Gamma(\delta)}{4(\delta-2)} \int_{0}^{1} \d r \, h(r) \, \langle \mu_{\delta-3} (\d b) , \Sigma^{\delta}_{r} (X_s | b) \rangle.\]
So there remains to establish the existence of and compute the limits \eqref{right_limit} and \eqref{left_limit}. For this, we use the following lemma:
\begin{lm}
\label{asymptotics}
Let $\alpha, \beta, \gamma \in \mathbb{C}$ such that $\gamma \notin \mathbb{Z}_{-}$, and $\gamma - \alpha - \beta \in \mathbb{R}^{*}_{-} \setminus \mathbb{Z}$. Then, for $z \in (0,1)$ tending  to $1$,
\begin{equation*}
{}_{2}F_{1}(\alpha, \beta, \gamma, z) \underset{z \to 1}{\sim} \frac{ \Gamma(\gamma) \Gamma(\alpha + \beta -\gamma)}{\Gamma(\alpha) \Gamma(\beta)} \, (1-z)^{\gamma-\alpha -\beta}.
\end{equation*}
\end{lm}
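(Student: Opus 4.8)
The plan is to deduce the asymptotic from the classical connection formula linking the hypergeometric series around $z=0$ with the two local solutions of the hypergeometric equation around $z=1$. Under the standing hypothesis $\gamma-\alpha-\beta\notin\mathbb{Z}$, the functions $z\mapsto {}_{2}F_{1}(\alpha,\beta,\alpha+\beta-\gamma+1,1-z)$ and $z\mapsto(1-z)^{\gamma-\alpha-\beta}\,{}_{2}F_{1}(\gamma-\alpha,\gamma-\beta,\gamma-\alpha-\beta+1,1-z)$ form a fundamental system of solutions of the hypergeometric ODE near $z=1$; since $z\mapsto{}_{2}F_{1}(\alpha,\beta,\gamma,z)$ solves the same equation on $(0,1)$, it is a linear combination of the two, and identifying the connection coefficients gives, for all $z\in(0,1)$,
\begin{equation*}
{}_{2}F_{1}(\alpha,\beta,\gamma,z)=\frac{\Gamma(\gamma)\Gamma(\gamma-\alpha-\beta)}{\Gamma(\gamma-\alpha)\Gamma(\gamma-\beta)}\,{}_{2}F_{1}(\alpha,\beta,\alpha+\beta-\gamma+1,1-z)+\frac{\Gamma(\gamma)\Gamma(\alpha+\beta-\gamma)}{\Gamma(\alpha)\Gamma(\beta)}\,(1-z)^{\gamma-\alpha-\beta}\,{}_{2}F_{1}(\gamma-\alpha,\gamma-\beta,\gamma-\alpha-\beta+1,1-z).
\end{equation*}

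Granting this identity, the conclusion follows by a direct limit computation. As $z\to 1^{-}$ one has $1-z\to 0^{+}$, so each of the two hypergeometric series on the right-hand side, being a power series in $1-z$ with constant term $1$, tends to $1$. Hence the first summand converges to the finite constant $\Gamma(\gamma)\Gamma(\gamma-\alpha-\beta)/(\Gamma(\gamma-\alpha)\Gamma(\gamma-\beta))$, which is finite since $\gamma-\alpha-\beta$ is a negative non-integer (hence not a pole of $\Gamma$) and $\gamma\notin\mathbb{Z}_{-}$; while in the second summand the coefficient $\Gamma(\gamma)\Gamma(\alpha+\beta-\gamma)/(\Gamma(\alpha)\Gamma(\beta))$ is finite and non-zero — here one uses that $\alpha+\beta-\gamma>0$, so $\Gamma(\alpha+\beta-\gamma)$ is finite, together with $\alpha,\beta,\gamma\notin\mathbb{Z}_{-}$ — and $(1-z)^{\gamma-\alpha-\beta}\to+\infty$ because $\gamma-\alpha-\beta<0$. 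The second summand therefore dominates, and multiplying through by $(1-z)^{-(\gamma-\alpha-\beta)}$ and letting $z\to1^{-}$ gives exactly $(1-z)^{-(\gamma-\alpha-\beta)}\,{}_{2}F_{1}(\alpha,\beta,\gamma,z)\to\Gamma(\gamma)\Gamma(\alpha+\beta-\gamma)/(\Gamma(\alpha)\Gamma(\beta))$, which is the claim.

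It remains to justify the connection formula, and this is the only non-elementary ingredient. It is entirely classical (see e.g. Abramowitz and Stegun, formula 15.3.6, or the DLMF, $\S 15.8$), so one option is simply to cite it; if a self-contained argument is preferred, as was done for Lemma \ref{classical_prop}, I would proceed as follows. First restrict to parameters with $\operatorname{Re}(\gamma)>\operatorname{Re}(\beta)>0$, where Euler's representation ${}_{2}F_{1}(\alpha,\beta,\gamma,z)=\frac{\Gamma(\gamma)}{\Gamma(\beta)\Gamma(\gamma-\beta)}\int_{0}^{1}t^{\beta-1}(1-t)^{\gamma-\beta-1}(1-zt)^{-\alpha}\,\mathrm{d}t$ holds; since for $\gamma-\alpha-\beta<0$ the only obstruction to passing to the limit $z\to1$ inside the integral comes from the endpoint $t=1$, the change of variables $1-t=(1-z)v$ isolates the leading singular factor as $(1-z)^{\gamma-\alpha-\beta}$ times $\int_{0}^{\infty}v^{\gamma-\beta-1}(1+v)^{-\alpha}\,\mathrm{d}v=\Gamma(\gamma-\beta)\Gamma(\alpha+\beta-\gamma)/\Gamma(\alpha)$ (the remaining contribution staying bounded), which already reproduces the constant $\Gamma(\gamma)\Gamma(\alpha+\beta-\gamma)/(\Gamma(\alpha)\Gamma(\beta))$ and in fact proves the lemma directly in that parameter range. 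The general case then follows by analytic continuation, both sides of the connection formula being analytic in $(\alpha,\beta,\gamma)$ on the connected domain $\{\gamma\notin\mathbb{Z}_{-},\ \gamma-\alpha-\beta\notin\mathbb{Z}\}$ (one may instead reduce the parameters explicitly via contiguity relations such as Lemma \ref{classical_prop}). The main obstacle is precisely this classical input; everything past it is the elementary estimate above.
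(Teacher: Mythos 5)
Your proof is correct and follows essentially the same route as the paper: invoke the classical connection formula expressing ${}_{2}F_{1}(\alpha,\beta,\gamma,\cdot)$ in the basis of local solutions at $z=1$ (the paper cites Thm 8.5 of Viola where you cite Abramowitz--Stegun 15.3.6 / DLMF \S 15.8 --- and your third parameter $\alpha+\beta-\gamma+1$ in the first term is the correct one, the paper's $\alpha+\beta-\gamma-1$ being a typo), then let $z\to 1^{-}$ and note that the second term dominates because $\gamma-\alpha-\beta<0$ while both series tend to $1$. The supplementary Euler-integral sketch is an optional self-contained addition, not a departure from the paper's argument.
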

\begin{proof}
By Thm 8.5 in \cite{viola2016introduction}, the following equality holds for all $z \in (0,1)$:
\begin{align*}
& {}_{2}F_{1}(\alpha, \beta, \gamma, z) =  \frac{ \Gamma(\gamma) \Gamma(\gamma-\alpha - \beta)}{\Gamma(\gamma - \alpha) \Gamma(\gamma - \beta)} \, {}_{2}F_{1}(\alpha, \beta, \alpha + \beta - \gamma - 1, 1-z) \\
& + \frac{ \Gamma(\gamma) \Gamma(\alpha + \beta -\gamma)}{\Gamma(\alpha) \Gamma(\beta)} \, (1-z)^{\gamma-\alpha -\beta} \, {}_{2}F_{1}(\gamma-\alpha, \gamma -\beta, \gamma-\alpha - \beta +1, 1-z).
\end{align*}
Now, the functions ${}_{2}F_{1}(\alpha, \beta, \alpha + \beta - \gamma - 1, \cdot)$ and ${}_{2}F_{1}(\gamma-\alpha, \gamma -\beta, \gamma-\alpha - \beta +1, \cdot)$ are continuous at $0$ and take value $1$ there, while $(1-z)^{\gamma-\alpha -\beta} \to +\infty$ as $z \to 1$, since $\gamma-\alpha -\beta < 0$. The claim follows.
\end{proof}
Now, recalling the computations done in the first step, we have, for all  $r >s$,
\begin{align*}
\frac{d}{dr} E^{\delta} \left[X_{r} X_{s}\right] =&  - K(\delta) \, s z^{-1/2} (1-z)^{\delta/2 +1} {}_{2}F_{1} \left( \frac{\delta+1}{2}, \frac{\delta+1}{2}, \frac{\delta}{2}, z \right) \\
&+ K(\delta) \frac{1}{2} \frac{1-s}{1-r}  z^{1/2} (1-z)^{\delta/2} {}_{2}F_{1} \left( \frac{\delta+1}{2}, \frac{\delta-1}{2}, \frac{\delta}{2}, z \right) 
\end{align*}
where $z := \frac{s(1-r)}{r(1-s)} \in (0,1)$. Therefore, letting $r \searrow s$ and using Lemma \ref{asymptotics} we see that
\begin{align*}
\underset{r \searrow s}{\lim} \ \frac{d}{dr} E^{\delta} \left[X_{r} X_{s}\right]  =&- K(\delta) \frac{\Gamma \left( \frac{\delta}{2} \right) \Gamma(\frac{\delta}{2} + 1)}{\Gamma \left( \frac{\delta + 1}{2} \right)^{2}} s
+ \frac{1}{2} K(\delta) \frac{\Gamma\left( \frac{\delta}{2} \right)^{2}}{\Gamma \left( \frac{\delta + 1}{2} \right) \Gamma \left( \frac{\delta - 1}{2} \right)} \\  
=& -\delta s + \frac{\delta-1}{2}.
\end{align*}
Similarly, for all $r < s$, we have
\begin{align*}
\frac{d}{dr} E^{\delta} \left[X_{r} X_{s}\right] =&  K(\delta) \, (1-s) z^{-1/2} (1-z)^{\delta/2 +1} {}_{2}F_{1} \left( \frac{\delta+1}{2}, \frac{\delta+1}{2}, \frac{\delta}{2}, z \right) \\
&- \frac{1}{2} K(\delta) \, \frac{1}{2} \frac{s}{r}  z^{1/2} (1-z)^{\delta/2} {}_{2}F_{1} \left( \frac{\delta+1}{2}, \frac{\delta-1}{2}, \frac{\delta}{2}, z \right) 
\end{align*}
where $z := \frac{r(1-s)}{s(1-r)} \in (0,1)$. Therefore, letting $r \nearrow s$ and using Lemma \ref{asymptotics} we see that
\begin{align*}
\underset{r \nearrow s}{\lim} \ \frac{d}{dr} E^{\delta} \left[X_{r} X_{s}\right]  =& K(\delta) \frac{\Gamma \left( \frac{\delta}{2} \right) \Gamma(\frac{\delta}{2} + 1)}{\Gamma \left( \frac{\delta + 1}{2} \right)^{2}}  \, (1-s)
- \frac{1}{2} K(\delta) \frac{\Gamma\left( \frac{\delta}{2} \right)^{2}}{\Gamma \left( \frac{\delta + 1}{2} \right) \Gamma \left( \frac{\delta - 1}{2} \right)} \\  
=& \delta (1-s) - \frac{\delta-1}{2}.
\end{align*}
Therefore,  $\frac{d^{+}}{dr} E^{\delta} [X_{r} X_{s}]$ and  $\frac{d^{-}}{dr} E^{\delta} [X_{r} X_{s}]$ do indeed exist, and they satisfy
\begin{align*}
\frac{d^{+}}{dr} E^{\delta} \left[X_{r} X_{s}\right] - \frac{d^{-}}{dr} E^{\delta} \left[X_{r} X_{s}\right] =& \left(-\delta s + \frac{\delta-1}{2}\right) - \left(\delta (1-s) - \frac{\delta-1}{2}\right) \\
=& \ -1.
\end{align*}
Hence, \eqref{two_succ_ibps}, finally becomes
\begin{align*}
\label{two_succ_ibps}
\int_{0}^{1} h''(r) \, E^{\delta} [X_{r} X_{s}] \, \d r &= - h(s) \\
& - \frac{\Gamma(\delta)}{4(\delta-2)} \int_{0}^{1} \d r \, h(r) \, \langle \mu_{\delta-3} (\d b) , \Sigma^{\delta}_{r} (X_s | b) \rangle,
\end{align*}
which concludes the proof of Theorem \ref{ibpf_linear_test_functn}.
\end{proof}

\section{A more general class of functionals}

More generally, given a continuous function $\varphi : [0,1] \to \mathbb{R}$ and a finite Borel measure $m$ on $[0,1]$, we can consider the functional $\Phi$ defined on $C([0,1])$ by
\begin{equation}
\label{linear_times_exp_func}
 \Phi(X) := \langle \varphi , X \rangle \exp \left( -\langle m, X^{2} \rangle \right), \quad X \in C([0,1]),
\end{equation} 
which is a product of functionals of the form \eqref{linear_functional} and \eqref{exp_square_functional}.  
Note that, as soon as $\varphi \neq0$ and $m \neq 0$, $\Phi$ is neither of of the form \eqref{exp_square_functional} nor of the form \eqref{linear_functional}, and cannot be written as a linear combination of such functionals.
However, using the same arguments as above, and interpreting $\exp \left( -\langle m, X^{2} \rangle \right) P^\delta({\rm{d}} X)$ as the law (up to a constant) of a time-changed  Bessel bridge (see \cite[Lemma 3.3]{EladAltman2019}), one can show that the IbPF above also hold for a functional $\Phi$ of the form \eqref{linear_times_exp_func}. Since the techniques are the same as those presented above, but the computations much lenghtier, we do not provide a proof of this fact.

\bibliographystyle{amsplain}
\bibliography{Hypergeometric_Functions_Revised}

\end{document}